\documentclass{article}           %% ceci est un commentaire (apres le caractere %)
\usepackage[latin1]{inputenc}     %% adapte le style article aux conventions francophones
\usepackage[T1]{fontenc}     
\usepackage{amssymb,amsmath,latexsym}
\usepackage[dvips]{graphicx}      %% permet d'importer des graphiques au format .EPS (postscript)
\usepackage[nottoc, notlof, notlot]{tocbibind}
  \usepackage[all]{xy} %% diagramme commutatifs

\newtheorem{theorem}{Theorem}

\newtheorem{proposition}{Proposition}

\newtheorem{corollary}{Corollary}
\newenvironment{proof}[1][Proof]{\noindent\textbf{#1.} }{\ \rule{0.5em}{0.5em}}

            %% permet de générer un index automatiquement
\title{Tensor product of $f$-rings}     %% \title est une macro, entre { } figure son premier argument
\author{Mohamed Amine BEN AMOR \\ Laboratoire LATAO \\Facult\'e des Sciences de Tunis}        %% idem

\begin{document}     
\maketitle

\begin{abstract}
In this paper we prove that $\ell$-group tensor product of archimedean $f$-rings is an $f$-ring. We will use this result to characterize multiplicative $\ell$-bimorphisms between unital $f$-rings.
\end{abstract}

\section{Introduction}
Since Martinez in \cite{MartinezTens}, constructed the $\ell$-group tensor product, several authors studied the tensor product of ordered structures, namely Fremlin in the framework of Riesz spaces (see \cite{FremlinTens}) and Banach Lattices (see \cite{FremlinBanach}).

\noindent Buskes And Van Rooij used the Fremlin tensor product to reconstruct the $\ell$-group tensor product. (see \cite{BuskesTensor}). In this end, they used the Lattice cover of $\ell$-groups (see \cite{ConradCov}).

\noindent Recently, Azouzi, Ben Amor and Jaber (see \cite{AzouziAmineJaber}) and separately Buskes and Wicksted (see \cite{Buskesfalg}) proved that the Riesz (Fremlin) tensor product of archimedean $f$-algebras is an $f$-algebra. 

\noindent In this work we will use the recent works in the framework of $f$-algebras to prove that the $\ell$-group tensor product of archimedean $f$-rings is an $f$-ring. We will use this tensor product to generalize a result of Ben Amor and Boulabiar in \cite{BoulabAmine}.

\noindent We assume that the reader is familiar with the basic concepts of the theory of lattice ordered groups ($\ell$-groups) and $f$-rings. For unexplained terminology and notations we refer to the books  \cite{AF},  \cite{BKW} and \cite{Steinberg}
\section{Tensor product of $f$-rings}

\begin{theorem}\label{cover}
If $G$ is an archimedean $f$-ring then the vector lattice cover of $G$, $R[G]$, is  an $f$-algebra.
\end{theorem}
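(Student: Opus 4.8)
The plan is to transport the ring multiplication of $G$ along the embedding $G \hookrightarrow R[G]$ and extend it to the whole cover by relatively uniform continuity. I will exploit two features of Conrad's cover recorded in \cite{ConradCov} and used in \cite{BuskesTensor}: that $R[G]$ is an archimedean vector lattice into which $G$ embeds as an $\ell$-subgroup, and that the divisible hull $D = \mathbb{Q}\otimes G$ sits inside $R[G]$ as a relatively uniformly dense sublattice, the cover being relatively uniformly complete. Thus every $a \in R[G]$ is a relatively uniform limit of a sequence from $D$, and such limits of $D$-sequences remain in $R[G]$.

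First I would extend the multiplication of $G$ to $D$. Since the product on $G$ is $\mathbb{Z}$-bilinear, it extends uniquely to a $\mathbb{Q}$-bilinear product on $D$ by $(pg)(qh) = (pq)(gh)$; because multiplication by positive rationals preserves both order and disjointness, $D$ becomes a $\mathbb{Q}$-$f$-algebra, and in particular the $f$-ring identity $|xy| = |x|\,|y|$ holds on $D$. I would then extend the product to $R[G]$: given $a,b \in R[G]$, choose $a_n, b_n \in D$ with $a_n \to a$ and $b_n \to b$ relatively uniformly, with regulators $u$ and $v$. Discarding finitely many terms we may assume $|a_n| \le |a| + u =: A$ and $|b_n| \le |b| + v =: B$, so that
$$|a_n b_n - a_m b_m| \le |a_n|\,|b_n - b_m| + |a_n - a_m|\,|b_m| \le A\,|b_n - b_m| + |a_n - a_m|\,B .$$
Combined with $|xy| = |x|\,|y|$ and the Cauchy estimates $|b_n - b_m| \le \tfrac1k v$, $|a_n - a_m| \le \tfrac1k u$, this shows $(a_n b_n)$ is relatively uniformly Cauchy with regulator $Av + uB$; by completeness of $R[G]$ it converges, and I set $a\cdot b := \lim a_n b_n$. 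A standard interleaving argument shows the limit is independent of the chosen sequences, so the product is well defined.

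It then remains to check that this product makes $R[G]$ an $f$-algebra. Bilinearity over $\mathbb{R}$ and associativity hold on $D$ and survive the passage to relatively uniform limits, since scalar multiplication, addition and the lattice operations are relatively uniformly continuous in an archimedean vector lattice. For positivity, if $a, b \ge 0$ I replace the approximants by $a_n^+, b_n^+$, which still converge to $a, b$; then $a_n^+ b_n^+ \ge 0$ in $D$, and relatively uniform closedness of the positive cone gives $ab \ge 0$. For the $f$-condition, suppose $a \wedge b = 0$ and $c \ge 0$; taking $a_n, b_n, c_n \in D^+$ approximating $a, b, c$ and replacing $a_n, b_n$ by $a_n - a_n\wedge b_n$ and $b_n - a_n\wedge b_n$ (which are disjoint in $D$ and still converge to $a, b$, since $a_n \wedge b_n \to a \wedge b = 0$), the $f$-ring axiom in $D$ yields $(c_n a_n) \wedge b_n = 0$; passing to the relatively uniform limit and using continuity of $\wedge$ gives $(ca)\wedge b = 0$, and symmetrically $(ac)\wedge b = 0$.

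The crux is the well-definedness of the extended product: everything hinges on the relatively uniform continuity estimate displayed above, which rests on the $f$-ring identity $|xy| = |x|\,|y|$ and on controlling the approximants by fixed regulators, together with the relatively uniform completeness of $R[G]$ so that the limits genuinely lie in the cover. The preservation of the sharp $f$-condition under limits is the other delicate point, which I handle by first making the approximants disjoint; the remaining axioms then follow routinely from the relatively uniform continuity of the vector-lattice operations.
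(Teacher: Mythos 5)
Your argument breaks down at the completeness claim, which is where you yourself locate the crux. Conrad's vector lattice cover $R[G]$ is by construction the \emph{smallest} vector lattice containing $G$: the vector sublattice of $\overline{G^d}$ (the Dedekind--MacNeille completion of the divisible hull) generated by $G$, so its elements are finite lattice--linear combinations of elements of $G$. Such a sublattice is in general neither relatively uniformly complete nor closed under relatively uniform limits of sequences from $D$. Concretely, let $G$ be the archimedean $f$-ring of eventually constant integer sequences with pointwise operations; then $R[G]$ is the space of eventually constant real sequences, while the sequences $x_n \in D$ given by $x_n(k) = 1/\min(n,k)$ satisfy $|x_n - x_m| \le \tfrac{1}{\min(n,m)}\,\mathbf{1}$ with regulator $\mathbf{1} \in G$, yet their limit $(1/k)_{k \ge 1}$ is not eventually constant and so lies outside $R[G]$. (Your density claim, by contrast, is correct: approximating real scalars by rationals in a lattice--linear combination does give relatively uniform convergence with regulator in $G$.) Consequently your definition $a \cdot b := \lim a_n b_n$ produces, a priori, only an element of $\overline{G^d}$; the Cauchy estimate via $|xy| = |x|\,|y|$ is sound, but nothing in your argument shows the limit lies in $R[G]$, i.e.\ that $R[G]$ is closed under the product. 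That closure is exactly the point of the theorem, and it is the step you have skipped.

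The gap is reparable, and the repair is in substance what the paper does. First, carry out your limit construction in $\overline{G^d}$, which is Dedekind complete and archimedean, hence relatively uniformly complete --- or, more simply, invoke Lemma 3 of \cite{JohnsonComp}, which already says that the Dedekind--MacNeille completion of an archimedean $f$-ring is again an archimedean $f$-ring, making the whole extension-by-continuity apparatus unnecessary. Second, to get the product back into $R[G]$, use the $f$-ring identities: multiplication by a positive element distributes over finite suprema and infima, so the product of two finite lattice--linear combinations of elements of $G$ is again a lattice--linear combination of products $gh \in G$, hence lies in $R[G]$. This is Theorem 3.3 of \cite{IsbellHenr} (the $\ell$-subspace generated by a subring of an $f$-ring is a subring), the second citation in the paper's two-line proof. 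With that algebraic closure step inserted, your relatively uniform continuity argument becomes a correct (if laborious) verification that the r.u.\ closure of $D$ in $\overline{G^d}$ is an $f$-algebra; without it, the assertion ``by completeness of $R[G]$ it converges'' is false as stated, and the theorem does not follow.
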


\begin{proof}
According to \cite{ConradCov}, $R[G]$ is the $\ell$-subspace of $\overline{G^d}$ generated by $G$, where $\overline{G^d}$ is the Dedekind-MacNeille completion of the divisible hull $G^d$.

\noindent $G^d$ is obviously an archimedian $f$-ring, this with the lemma $3$ in \cite{JohnsonComp} lead to $\overline{G^d}$ is an archimedean $f$-ring.

\noindent Since $R[G]$ is the $\ell$-subspace of the $f$-ring $\overline{G^d}$ generated by the $f$-ring $G$, $R[G]$ is itself an $f$-ring and then an $f$-algebra (see Theorem $3.3$ in \cite{IsbellHenr}).
\end{proof}

\begin{corollary}\label{unitalcover}
If $G$ is a unital archimedean $f$-ring with $e_G$ as unit then the vector lattice cover of $G$, $R[G]$, is  a unital $f$-algebra with the same unit.
\end{corollary}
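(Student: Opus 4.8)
The plan is to build directly on Theorem~\ref{cover}. By that result $R[G]$ is already an $f$-algebra, realized as a sub-$f$-ring of $\overline{G^d}$ that contains $G$ and hence contains $e_G$; in particular $R[G]$ is closed under the product inherited from $\overline{G^d}$. Consequently the only thing left to check is that the multiplicative identity $e_G$ of $G$ is still a multiplicative identity for the larger $f$-algebra $R[G]$. I would begin by recording that $e_G\geq 0$, since squares are positive in any $f$-ring and $e_G=e_G^2$.

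The heart of the proof is to show that the left and right multiplications $L\colon x\mapsto e_G x$ and $R\colon x\mapsto x e_G$ are vector-lattice endomorphisms of $R[G]$, i.e. $\mathbb{R}$-linear maps commuting with $\vee$ and $\wedge$. Linearity is immediate from the bilinearity of the $f$-algebra product. For the lattice part, the key lemma I would isolate is that multiplication by a positive element of an $f$-ring preserves finite suprema: if $a\geq 0$ then $a(x\vee y)=ax\vee ay$ and dually for $\wedge$. This follows from the defining $f$-ring identity applied twice to the disjoint pair $x^+\wedge x^-=0$, which yields $(ax^+)\wedge(ax^-)=0$ and hence $(ax)^+=ax^+$; feeding this into $x\vee y=y+(x-y)^+$ gives $a(x\vee y)=ay+(ax-ay)^+=ax\vee ay$. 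Taking $a=e_G$ shows that $L$ and $R$ commute with $\vee$, $\wedge$, $+$ and scalar multiplication.

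To conclude, I would run a generation argument. Since $e_G$ is the unit of $G$, both $L$ and $R$ restrict to the identity on $G$. Consider the fixed set $\{x\in R[G]:L(x)=x\}$: because $L$ commutes with each of the operations $+$, scalar multiplication, $\vee$ and $\wedge$, this set is an $\ell$-subspace of $R[G]$, and it contains $G$. As $R[G]$ is by construction the $\ell$-subspace of $\overline{G^d}$ generated by $G$, the fixed set must be all of $R[G]$; thus $e_G x=x$ for every $x\in R[G]$. The same argument with $R$ gives $x e_G=x$, so $e_G$ is a two-sided unit and $R[G]$ is a unital $f$-algebra with unit $e_G$.

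I expect the single delicate point to be the lemma that positive multiplication is a vector-lattice homomorphism, as this is where the $f$-ring axiom is genuinely used; the remaining steps are bookkeeping. I would also note that $R[G]$, being a sub-object of the archimedean $f$-ring $\overline{G^d}$, is archimedean and therefore commutative, so in fact $L=R$ and the two-sided verification collapses to one side.
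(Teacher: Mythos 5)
Your proof is correct, and its core maneuver is the same as the paper's: reduce everything to showing that multiplication by $e_G$ is an $\ell$-endomorphism of $R[G]$ that restricts to the identity on $G$, and then conclude it is the identity globally. Where you diverge is in the final step and in the level of detail. The paper disposes of the conclusion by citing Theorem 2 of \cite{Bleier}, i.e.\ the uniqueness property of extensions to the vector lattice cover, treating $\pi_e$ as an extension of the canonical embedding; you instead run an elementary fixed-set argument, observing that $\{x \in R[G] : e_G x = x\}$ is a vector sublattice containing $G$ and invoking Conrad's concrete description of $R[G]$ as the $\ell$-subspace of $\overline{G^d}$ generated by $G$. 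Your route is self-contained but tied to the concrete construction from \cite{ConradCov}; the paper's citation works even if one only knows $R[G]$ abstractly through its universal property, and is correspondingly shorter. You also make explicit, with a correct proof, the lemma that multiplication by a positive element of an $f$-ring preserves $\vee$ and $\wedge$ (via $(ax)^+ = ax^+$ and $x \vee y = y + (x-y)^+$) --- a standard fact the paper silently absorbs into the assertion that $\pi_e$ is an $\ell$-homomorphism --- and your closing remark that archimedeanness of $\overline{G^d}$ forces commutativity, collapsing the two-sided check to one side, is a correct and worthwhile observation the paper leaves implicit.
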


\begin{proof}
We proved in Theorem \ref{cover} that $R[G]$ is an $f$-ring. It remains to prove that $e_G$ is a unit in $R[G]$. The map \[ \begin{array}{cccc}
\pi_e: & R[G] &\to & R[G] \\
 & g & \mapsto & g.e_G
\end{array}
\] 
is an $\ell$-homomorphism which extends the canonical embedding of $G$ in $R[G]$. Then, according to Theorem 2 in \cite{Bleier}, $\pi_e$ is the identity and $e_G$ is the unit element of the $f$-algebra $R[G]$.
\end{proof}
\vspace{0.5cm}

\begin{theorem}\label{tensor}
Let  $G$ and $H$ be two $f$-rings, then the archimedean $\ell$-group tensor product $G \overline{\otimes}H$ is itself an $f$-ring.
\end{theorem}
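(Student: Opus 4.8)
The plan is to realize $G \overline{\otimes} H$ concretely inside an already-known $f$-algebra and then to show it is closed under the multiplication it inherits. Assume, as in the abstract, that $G$ and $H$ are archimedean. By Theorem~\ref{cover} their vector lattice covers $R[G]$ and $R[H]$ are archimedean $f$-algebras, and by the theorem of Azouzi--Ben Amor--Jaber (\cite{AzouziAmineJaber}, and independently Buskes--Wicksted \cite{Buskesfalg}) the Fremlin tensor product $F := R[G] \overline{\otimes} R[H]$ is again an archimedean $f$-algebra, with multiplication determined on simple tensors by $(u \otimes v)(u' \otimes v') = (uu') \otimes (vv')$; in particular $F$ is an $f$-ring. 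The first step is then to invoke the Buskes--Van Rooij reconstruction of the $\ell$-group tensor product (\cite{BuskesTensor}) to identify $G \overline{\otimes} H$ with the $\ell$-subgroup $W$ of $F$ generated by the set $\mathcal{G} = \{g \otimes h : g \in G,\ h \in H\}$.

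Granting this identification, it suffices to prove that $W$ is closed under the ring multiplication of $F$: an $\ell$-subring of an $f$-ring is automatically an $f$-ring, the defining implication $a \wedge b = 0 \Rightarrow (ca)\wedge b = (ac)\wedge b = 0$ for $c \ge 0$ being inherited by every sub-$\ell$-ring. The restricted multiplication sends $g \otimes h$ and $g' \otimes h'$ to $(gg') \otimes (hh')$, which is the natural product on $G \overline{\otimes} H$. The engine of the closure argument is the standard fact that in an $f$-ring, left and right multiplication by a positive element is an $\ell$-homomorphism of the underlying $\ell$-group. I would proceed in two stages. For the base case, take a positive generator $g \otimes h$ (with $g, h \ge 0$, so $g \otimes h \ge 0$ in $F$); then right multiplication $R_{g\otimes h}\colon x \mapsto x(g \otimes h)$ is an $\ell$-homomorphism $F \to F$ carrying each generator $g' \otimes h'$ to the generator $(g'g) \otimes (h'h) \in W$. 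Since an $\ell$-homomorphism maps the $\ell$-subgroup generated by a set into the $\ell$-subgroup generated by its image, $R_{g \otimes h}(W) \subseteq W$. Writing an arbitrary generator by bilinearity as $g \otimes h = g^+\otimes h^+ - g^+\otimes h^- - g^-\otimes h^+ + g^-\otimes h^-$ and using additivity of $x \mapsto xw$ in $w$, I obtain $R_{g\otimes h}(W) \subseteq W$ for every generator, and symmetrically $L_{g\otimes h}(W) \subseteq W$.

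The second stage bootstraps this to all of $W$. Let $u \in W^+$. Since $u \ge 0$ in the $f$-algebra $F$, left multiplication $L_u$ is an $\ell$-homomorphism, and on a generator it acts by $L_u(g'\otimes h') = u\,(g'\otimes h') = R_{g'\otimes h'}(u)$, which lies in $W$ by the base case (as $u \in W$). Hence $L_u$ sends every generator into $W$, and therefore, again by the $\ell$-homomorphism property, $L_u(W) \subseteq W$; symmetrically $R_u(W)\subseteq W$. Because $W$ is an $\ell$-subgroup, each $w \in W$ decomposes as $w = w^+ - w^-$ with $w^\pm \in W^+$, so multiplication by an arbitrary element of $W$ preserves $W$, giving $W\cdot W \subseteq W$. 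I expect the main obstacle to be precisely this passage from closure on the generating set to closure on the full $\ell$-subgroup: elements of $W$ are built from $\mathcal{G}$ not only by sums but by lattice operations, and it is the $\ell$-homomorphism behaviour of multiplication by positive elements that lets the generation survive under products. Once $W\cdot W \subseteq W$ is established, $G \overline{\otimes} H \cong W$ is an $\ell$-subring of the $f$-ring $F$ and hence an $f$-ring, completing the proof.
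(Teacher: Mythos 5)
Your proposal is correct, and its skeleton is the paper's own: you identify $G\overline{\otimes}H$ with the $\ell$-subgroup $W$ of $F=R[G]\overline{\otimes}R[H]$ generated by the tensors $g\otimes h$ via the Buskes--Van Rooij reconstruction (the paper spells this identification out in two steps, first placing $G\overline{\otimes}H$ inside $R[G\otimes H]$ and then invoking Proposition~8 of \cite{BuskesTensor} for the isomorphism $R[G\otimes H]\cong R[G]\overline{\otimes}R[H]$, but this is only a difference of bookkeeping), and you make the ambient space an $f$-algebra by Theorem~\ref{cover} combined with \cite{AzouziAmineJaber}, exactly as the paper does. Where you genuinely diverge is the final step. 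The paper disposes of it with a single citation: Theorem~3.3 of \cite{IsbellHenr}, which asserts that the $\ell$-subgroup of an $f$-ring generated by a subring is itself a subring, hence a sub-$f$-ring. You instead prove this closure by hand, and your two-stage bootstrap is sound: multiplication by a positive element of an $f$-ring is indeed an $\ell$-homomorphism, an $\ell$-homomorphism carries the $\ell$-subgroup generated by a set into any $\ell$-subgroup containing the image of that set (take preimages), the passage from positive simple tensors to arbitrary generators via $g\otimes h=g^{+}\otimes h^{+}-g^{+}\otimes h^{-}-g^{-}\otimes h^{+}+g^{-}\otimes h^{-}$ is legitimate because $G$ and $H$ are $\ell$-subgroups of their covers, and the trick $L_{u}(g'\otimes h')=R_{g'\otimes h'}(u)$ for $u\in W^{+}$ correctly lifts closure from the generating set to all of $W$; finally, the $f$-ring implication $a\wedge b=0\Rightarrow (ca)\wedge b=(ac)\wedge b=0$ for $c\geq 0$ does pass to any sub-$\ell$-ring. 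In effect you have reproved Henriksen--Isbell's Theorem~3.3 in the case at hand --- and since your argument uses nothing about tensor products beyond the fact that the simple tensors multiply into simple tensors, it works verbatim for any subring of any $f$-ring, so it is a proof of the cited general lemma as well. The paper's citation buys brevity; your version buys self-containedness and makes visible exactly why generation by lattice operations survives multiplication, which is the real content hidden behind the reference. One cosmetic point: as you noted, the theorem's hypothesis should be read as ``archimedean $f$-rings,'' since $\overline{\otimes}$ is the archimedean $\ell$-group tensor product and Theorem~\ref{cover} requires archimedeanness.
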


\begin{proof}
In \cite{BuskesTensor}, Buskes and Van Rooij stated that the archimedean $\ell$-group tensor product $G \overline{\otimes}H$ is the $\ell$-subgroup of $R[G\otimes H]$ generated by the algebraic tensor product $G\otimes H$. But according to the same paper (Proposition $8$), $R[G\otimes H]$ is group and lattice isomorphic to $R[G]\overline{\otimes} R[H]$. So we can consider that  $G \overline{\otimes}H$ is the $\ell$-subgroup of $R[G]\overline{\otimes} R[H]$ generated by the algebraic tensor product $G\otimes H$.
Theorem \ref{cover} with \cite{AzouziAmineJaber} lead to $R[G]\overline{\otimes} R[H]$ is an $f$-algebra. Using another time Theorem $3.3$ in \cite{IsbellHenr}, we can conclude that  $G \overline{\otimes}H$ is an $f$-ring which ends the proof.
\end{proof}

\begin{corollary}\label{unitalcase}
Let  $G$ and $H$ be two unital $f$-rings  with unit element $e_G$ and $e_H$ respectively, then the archimedean $\ell$-group tensor product $G \overline{\otimes}H$ is itself a unital $f$-ring with $e_G \otimes e_H$ as unit element.  
\end{corollary}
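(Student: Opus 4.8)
The plan is to leverage the two results already established: Theorem~\ref{tensor}, which tells us that $G \overline{\otimes} H$ is an $f$-ring, and Corollary~\ref{unitalcover}, which handles the unital passage from an $f$-ring to its vector lattice cover. First I would note that since $G$ and $H$ are unital archimedean $f$-rings, Corollary~\ref{unitalcover} gives that $R[G]$ and $R[H]$ are unital $f$-algebras with the same units $e_G$ and $e_H$ respectively. The strategy is then to identify the unit of the $f$-algebra tensor product $R[G] \overline{\otimes} R[H]$ and to check that it actually lies in (and acts as a unit on) the $\ell$-subgroup $G \overline{\otimes} H$ generated by $G \otimes H$.

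The key steps, in order, would be as follows. First I would invoke the known fact that the Riesz (Fremlin) tensor product of two unital $f$-algebras is a unital $f$-algebra whose unit is the elementary tensor $e_G \otimes e_H$; this is exactly the setting of \cite{AzouziAmineJaber} applied to $R[G]$ and $R[H]$. Second, I would observe that $e_G \otimes e_H$ belongs to the algebraic tensor product $G \otimes H$, hence it lies in the $\ell$-subgroup $G \overline{\otimes} H$ of $R[G] \overline{\otimes} R[H]$ generated by $G \otimes H$. Third, since Theorem~\ref{tensor} already establishes that $G \overline{\otimes} H$ is an $f$-ring under the multiplication inherited from $R[G] \overline{\otimes} R[H]$, it only remains to verify that $(e_G \otimes e_H) \cdot x = x$ for every $x$ in $G \overline{\otimes} H$.

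For this last verification I would use a density/generation argument analogous to the one in Corollary~\ref{unitalcover}: the element $e_G \otimes e_H$ is the unit of the ambient $f$-algebra $R[G] \overline{\otimes} R[H]$, so multiplication by it is the identity on the whole cover, and in particular on its $\ell$-subgroup $G \overline{\otimes} H$. Equivalently, I could argue directly on generators, checking $(e_G \otimes e_H)(g \otimes h) = (e_G g) \otimes (e_H h) = g \otimes h$ on elementary tensors and then extending to the $\ell$-subgroup they generate by the fact that multiplication is an $\ell$-bimorphism.

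The main obstacle I anticipate is a subtle one of consistency rather than difficulty: one must be certain that the multiplication making $G \overline{\otimes} H$ an $f$-ring in Theorem~\ref{tensor} is precisely the restriction of the $f$-algebra multiplication on $R[G] \overline{\otimes} R[H]$, so that the unit of the larger structure genuinely restricts to a unit on the subgroup. Since the $f$-ring structure in Theorem~\ref{tensor} is obtained exactly as the inherited structure from the cover (via Theorem~$3.3$ in \cite{IsbellHenr}), this compatibility holds, and the identification of $e_G \otimes e_H$ as the unit then follows immediately.
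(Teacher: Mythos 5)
Your proposal is correct and takes essentially the same route as the paper: Corollary~\ref{unitalcover} plus the result from \cite{AzouziAmineJaber} (Theorem~8 there) to make $R[G]\overline{\otimes}R[H]$ a unital $f$-algebra with unit $e_G\otimes e_H$, followed by restriction to the $f$-subring $G\overline{\otimes}H$. Your explicit verifications --- that $e_G\otimes e_H$ lies in $G\otimes H\subseteq G\overline{\otimes}H$ and that the multiplication on $G\overline{\otimes}H$ is the one inherited from the cover --- merely spell out what the paper's closing sentence leaves implicit.
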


\begin{proof}
According to corollary \ref{unitalcover}, $R[G]$ and $R[H]$ are unital $f$-algebras with $e_G$ and $e_H$ as unit element respectively. Theorem $8$ in \cite{AzouziAmineJaber} leads to $R[G]\overline{\otimes} R[H]$ is a unital $f$-algebra with $e_G \otimes e_H$ as unit element. Since $G \overline{\otimes}H$ is an $f$-subring of $R[G]\overline{\otimes} R[H]$, the result follows immediately.
\end{proof}

\section{An application}
Let $G$, $H$ and $K$ be archimedean $f$-rings. We recall that a biadditive map $\mathfrak{b}: G\times H \to K$ is said $\ell$-bimorphism if the maps: 
\[ \begin{array}{cccc}
\mathfrak{b}_1: & G &\to & K \\
 & g & \mapsto &\mathfrak{b}(g,b)
\end{array}
\]
and 
\[ \begin{array}{cccc}
\mathfrak{b}_2: & H &\to & K \\
 & h & \mapsto &\mathfrak{b}(a,h)
\end{array}
\]
are lattice homomorphisms for all $a$ in $G$ and $b$ in $H$.

\noindent An $\ell$-bimorphism $\mathfrak{b}: G\times H \to K$ is said to be multiplicative if \[\mathfrak{b}(ac,bd)=\mathfrak{b}(a,b)\mathfrak{b}(c,d)\]
for all $a$ and $c$ in $G$ and $b$ and $d$ in $H$.

\noindent Boulabiar and Toumi proved in \cite{BoulabiarToumi} that if $G$ and $H$ are archimedean $\Phi$-algebras (that is untital $f$-algebras) with $e_G$ and $e_H$ as units, and $K$ is semiprime (that is $K$ has no idempotent element) then the positive bilinear map $\mathfrak{b}$ is multiplicative if and only if $\mathfrak{b}$ is an $\ell$-bimorphism and $ \mathfrak{b}(e_G,e_H)$ is idempotent.

\noindent We will generalize this result in two directions. First we will deal with $f$-rings rather then $f$-algebras. Finally, we shall prove that the range
$f$-ring need not be reduced, which is, we believe, an important improvement.  

\noindent We pointed out that the tensor product we asked about in Theorem \ref{tensor} is the $\ell$-group tensor product that Buskes and Van Rooij studied in \cite{BuskesTensor} and earlier Martinez in \cite{MartinezTens}. The following universal propriety is still valid. Let $G$ and $H$ two archimedean $f$-rings. For any archimedean $f$-ring $K$ and every $\ell$-bimorphism $\varphi : G \times H \to K$ there exists an $\ell$-group homomorphism $\Phi : G \overline{\otimes}H \to K$ such that $\varphi (a,b) = \Phi(a \otimes b)$ for every $a$ in $G$ and $b$ in $H$.
$$
\xymatrix{
    G \times H  \ar[r]^\varphi \ar[d]_{\overline{\otimes}}  & K  \\
G \overline{\otimes}H   \ar[ru]_\Phi}
$$
The next proposition is a generalization of the Theorem $3.2$ in \cite{BoulabAmine} and it will play a key role in the generalization of Boulabiar-Toumi's theorem.
\begin{proposition}\label{positive}
Let $G$ be a unital $f$-ring with unit element $e_G$, $H$ be an archimedean $f$-ring and $T$ be a positive homomorphism between $G$ and $H$. Then $T$ is a ring homomorphism if and only if $T$ is an $\ell$-homomorphism and $T(e_G)$ is idempotent.
\end{proposition}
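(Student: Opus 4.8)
The plan is to reduce the statement to its known $f$-algebra analogue, Theorem $3.2$ of \cite{BoulabAmine}, by lifting everything to the vector lattice covers $R[G]$ and $R[H]$. By the standing convention of this section $G$ is archimedean, so Corollary \ref{unitalcover} makes $R[G]$ a unital archimedean $f$-algebra with the same unit $e_G$, while Theorem \ref{cover} makes $R[H]$ an archimedean $f$-algebra; the canonical maps $G \hookrightarrow R[G]$ and $H \hookrightarrow R[H]$ are simultaneously ring and $\ell$-group embeddings. The first task is to manufacture from $T$ a positive linear map $\widehat{T}\colon R[G]\to R[H]$ extending it. Since $T$ is a positive group homomorphism it extends uniquely and positively to the divisible hull $G^{d}$ by $\widehat{T}(\tfrac1n g)=\tfrac1n T(g)$; because $G^{d}$ is majorizing and order dense in $R[G]$, a Kantorovich-type argument extends this to a positive linear map on all of $R[G]$ with values in the Dedekind completion $\overline{H^{d}}$, after which one checks that the range actually stays inside $R[H]$.

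With $\widehat{T}$ in hand, the heart of the proof is the dictionary relating $T$ to $\widehat{T}$: first, $\widehat{T}(e_G)=T(e_G)$, so one is idempotent exactly when the other is; second, $T$ is an $\ell$-homomorphism if and only if $\widehat{T}$ is; and third, $T$ is a ring homomorphism if and only if $\widehat{T}$ is an $f$-algebra homomorphism. The $\ell$-homomorphism equivalence is the routine half, being precisely the universal property of the cover (an $\ell$-homomorphism on $G$ extends uniquely to one on $R[G]$, and $\widehat{T}$ restricts to $T$). The multiplicative equivalence is obtained by verifying $\widehat{T}(xy)=\widehat{T}(x)\widehat{T}(y)$ first for $x,y\in G^{d}$, where it is just the multiplicativity of $T$ carried across the divisible hull, and then propagating it to arbitrary elements of $R[G]$ using the order density of $G^{d}$ in $R[G]$ together with the separate order continuity of the multiplication on $\overline{H^{d}}$, as in \cite{AzouziAmineJaber}.

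Once the dictionary is in place both implications are immediate. For the forward direction the idempotency of $T(e_G)$ is also seen at once, since $T(e_G)^2=T(e_G^{2})=T(e_G)$. For the substantive content one applies Theorem $3.2$ of \cite{BoulabAmine} to the positive map $\widehat{T}$ between the $f$-algebras $R[G]$ and $R[H]$: it yields that $\widehat{T}$ is an algebra homomorphism if and only if $\widehat{T}$ is an $\ell$-homomorphism with $\widehat{T}(e_G)$ idempotent. Transporting this equivalence back along the embeddings, via the dictionary, gives exactly the asserted equivalence for $T$.

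The step I expect to be the main obstacle is the construction of $\widehat{T}$ together with the multiplicative half of the dictionary in the forward direction. Extending an $\ell$-homomorphism to the cover is standard, but a bare positive ring homomorphism is not yet known to respect the lattice, so one cannot simply invoke the $\ell$-extension theorem and must instead control the positive linear extension finely enough to see that it remains multiplicative on all of $R[G]$. This is exactly where the archimedean hypothesis on $H$ and the fact that $G^{d}$ is majorizing and lattice-generating in $R[G]$ are needed. It is also the reason for routing the forward direction through the cover rather than arguing directly: a positive ring homomorphism gives $T(a)T(b)=0$ whenever $a\wedge b=0$, but in a non-semiprime $H$ this does \emph{not} by itself force $T(a)\wedge T(b)=0$, so disjointness must instead be recovered from the $f$-algebra theorem after passing to the semiprime setting of the covers.
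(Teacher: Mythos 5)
There is a genuine gap, and it sits exactly where you predicted the ``main obstacle'' would be --- but it is worse than a technical difficulty: the strategy itself cannot work. Your forward direction rests on the claim that passing to the covers puts you in ``the semiprime setting,'' but the vector lattice cover does not remove nilpotents. The $f$-ring $H$ embeds in $R[H]$ as a subring, so any $x \in H$ with $x^2 = 0$ is still nilpotent in $R[H]$; the cover supplies scalar multiplication (an $f$-algebra structure), not reducedness. Consequently, applying Theorem $3.2$ of \cite{BoulabAmine} to $\widehat{T}\colon R[G] \to R[H]$ faces precisely the same obstruction as applying it to $T$ itself: the direction ``ring homomorphism $\Rightarrow$ $\ell$-homomorphism'' of that theorem is the one that needs a reduced range, and removing that hypothesis is the very point of this proposition. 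Your plan is therefore circular at the decisive step. Independently of this, the construction of $\widehat{T}$ is unsound: a Kantorovich-type positive extension into $\overline{H^{d}}$ is far from unique, its values on lattice combinations such as $g_1 \vee g_2$ are not determined by $T$, and your proposed propagation of multiplicativity from $G^{d}$ to $R[G]$ via order density plus separate order continuity of multiplication would require $\widehat{T}$ to preserve the suprema realizing elements of $R[G]^{+}$ from $G^{d}$ --- that is, it presupposes exactly the lattice behavior you are trying to deduce. The claim that the range stays inside $R[H]$ is likewise unjustified. (Your converse direction could be salvaged, since an $\ell$-homomorphism $T$ does extend to an $\ell$-homomorphism of the covers by Theorem $2$ of \cite{Bleier}, but the paper simply quotes that direction unchanged from \cite{BoulabAmine}.)

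The paper's actual argument for the forward direction is short, direct, and never leaves $H$: if $T$ is a ring homomorphism, then $T(e_G)$ is idempotent and $T(a) = T(e_G)T(a)$ for all $a$, so the range of $T$ lies in the band $T(e_G)^{\perp\perp}$, which by Lemmas $3.4$ and $3.5$ of \cite{AzouziAmine} is an $f$-ring with unit element $T(e_G)$ and hence is reduced. Now if $a \wedge b = 0$ in $G$, then $ab = 0$, so $T(a)T(b) = 0$, and reducedness of $T(e_G)^{\perp\perp}$ forces $T(a) \wedge T(b) = 0$; a positive additive disjointness-preserving map is an $\ell$-homomorphism. Note that your own closing observation --- that $T(a)T(b) = 0$ does not force $T(a) \wedge T(b) = 0$ in a non-semiprime $H$ --- correctly identifies the problem; the fix is not to enlarge $H$ to its cover (which preserves the defect) but to localize inside $H$ to the band generated by the idempotent $T(e_G)$, where a unit exists and semiprimeness comes for free.
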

\begin{proof}
The "Only if" part is unchanged from Theorem $3.2$ in \cite{BoulabAmine}. Only the "if" part needs some details. Since $T$ is a ring homomorphism then $T(e_G)$ is an idempotent element and for every $a$ in $G$ we have $T(a)=T(e_G)T(a)$. This means that the range of $T$ is included in the set $T(e_G)^{\perp\perp}$ which is an $f$-ring with $T(e_G)$ as unit element (see for example Lemma $3.4$ and $3.5$ in \cite{AzouziAmine}). Now,Take $a$ and $b$ in $G$ such that $a\wedge b=0$. From \[0=T(ab)=T(a)T(b),\] and the fact that $T(e_G)^{\perp\perp}$ is reduced, we can affirm that $T(a)\wedge T(b)=0$. And we are done.
\end{proof}

We have now gathered all the ingredients we need to prove the following Theorem

\begin{theorem}
Let $G$ and $H$ be archimedean unital $f$-rings with unit element $e_G$ and $e_H$ respectively and $K$ be an archimedean $f$-ring. Let $\mathfrak{b}: G \times H \to K$ be a positive biadditive homomorphism. The following
conditions are equivalent:
\begin{itemize}
\item[i) ]$\mathfrak{b}$ is multiplicative.
\item[ii) ] $\mathfrak{b}$ is an $\ell$-bimorphism and $\mathfrak{b}(e_G,e_H)$ is idempotent.
\end{itemize}
\end{theorem}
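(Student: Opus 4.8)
The plan is to funnel both implications through Proposition \ref{positive}, exploiting the fact that, by Corollary \ref{unitalcase}, the tensor product $G\overline{\otimes}H$ is a \emph{unital} $f$-ring with unit $e_G\otimes e_H$, together with the universal property of $\overline{\otimes}$ recalled just above the statement. The underlying idea is that a bimorphism on $G\times H$ should be equivalent to a single morphism on $G\overline{\otimes}H$, and multiplicativity of the bimorphism should correspond to the morphism being a ring homomorphism, which is exactly what Proposition \ref{positive} controls.

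For the implication (ii)$\Rightarrow$(i), suppose $\mathfrak{b}$ is an $\ell$-bimorphism. The universal property then provides an $\ell$-group homomorphism $\Phi:G\overline{\otimes}H\to K$ with $\mathfrak{b}(a,b)=\Phi(a\otimes b)$. Being an $\ell$-homomorphism, $\Phi$ is in particular positive, and $\Phi(e_G\otimes e_H)=\mathfrak{b}(e_G,e_H)$ is idempotent by hypothesis. Since $G\overline{\otimes}H$ is a unital $f$-ring with unit $e_G\otimes e_H$, Proposition \ref{positive} applies verbatim and makes $\Phi$ a ring homomorphism. I would then conclude by computing $\mathfrak{b}(ac,bd)=\Phi(ac\otimes bd)=\Phi\big((a\otimes b)(c\otimes d)\big)=\Phi(a\otimes b)\,\Phi(c\otimes d)=\mathfrak{b}(a,b)\,\mathfrak{b}(c,d)$, which is exactly multiplicativity.

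For the reverse implication (i)$\Rightarrow$(ii), idempotency is immediate from $\mathfrak{b}(e_G,e_H)^2=\mathfrak{b}(e_G e_G,e_H e_H)=\mathfrak{b}(e_G,e_H)$. Here I cannot yet invoke the universal property, since it requires knowing $\mathfrak{b}$ to be an $\ell$-bimorphism, so instead I would work with the partial maps at the units. Setting $T(g)=\mathfrak{b}(g,e_H)$ and $S(h)=\mathfrak{b}(e_G,h)$, multiplicativity gives $T(gc)=\mathfrak{b}(gc,e_H e_H)=T(g)T(c)$ and likewise $S(hd)=S(h)S(d)$, so $T$ and $S$ are positive ring homomorphisms, hence $\ell$-homomorphisms by the ``only if'' part of Proposition \ref{positive}. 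Finally $\mathfrak{b}(g,b)=\mathfrak{b}(g\,e_G,e_H\,b)=T(g)\,S(b)$; for $b\geq 0$ one has $S(b)\geq 0$, and since right multiplication by a positive element of an $f$-ring is a lattice homomorphism, $\mathfrak{b}_1$ is the composite of the two lattice homomorphisms $T$ and $(\,\cdot\,)S(b)$, hence a lattice homomorphism; the argument for $\mathfrak{b}_2$ is symmetric. This yields the $\ell$-bimorphism property.

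The hard part will not be the logical skeleton, which is essentially bookkeeping, but securing the two structural facts the argument silently relies on. First, one must confirm that the $f$-ring multiplication carried by $G\overline{\otimes}H$ as an $\ell$-subring of the $f$-algebra $R[G]\overline{\otimes}R[H]$ (Theorem \ref{tensor}) really satisfies the product rule $(a\otimes b)(c\otimes d)=ac\otimes bd$ on elementary tensors, since the whole computation in (ii)$\Rightarrow$(i) hinges on it. Second, one must verify that the $\Phi$ delivered by the universal property genuinely meets the hypotheses of Proposition \ref{positive}, i.e.\ that it is positive and sends the unit to an idempotent; both follow once $\Phi$ is known to be an $\ell$-homomorphism, but this is the point deserving care.
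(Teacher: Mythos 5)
Your proof is correct and takes essentially the same route as the paper's: ii) $\Rightarrow$ i) via the universal property, Corollary \ref{unitalcase}, and Proposition \ref{positive} (the paper invokes Theorem 3.2 of \cite{BoulabAmine}, of which Proposition \ref{positive} is the $f$-ring version), and i) $\Rightarrow$ ii) via the positive ring homomorphisms $g \mapsto \mathfrak{b}(g,e_H)$ and $h \mapsto \mathfrak{b}(e_G,h)$. You are in fact slightly more careful than the paper: the factorization $\mathfrak{b}(g,b)=T(g)S(b)$ with multiplication by a positive element being a lattice homomorphism (which upgrades the two unit-evaluated maps to the full $\ell$-bimorphism property), and the identity $(a\otimes b)(c\otimes d)=ac\otimes bd$ on elementary tensors, are both used silently in the paper, whereas you state them explicitly.
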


\begin{proof}

\underline{i) $\to$ ii)}
Since $\mathfrak{b}$ is multiplicative then so are the two positive homomorphisms $\mathfrak{b}_1$ and $\mathfrak{b}_2$, where\[ \begin{array}{cccc}
\mathfrak{b}_1: & G &\to & K \\
 & g & \mapsto &\mathfrak{b}(g,e_H)
\end{array}
\]
and 
\[ \begin{array}{cccc}
\mathfrak{b}_2: & H &\to & K \\
 & h & \mapsto &\mathfrak{b}(e_G,h).
\end{array}
\]
Proposition \ref{positive} yields to $\mathfrak{b}_1$ and $\mathfrak{b}_1$ are $\ell$-homomorphisms. Which means that $\mathfrak{b}$ is an $\ell$-bimorphism. $\mathfrak{b}(e_G,e_H)$ is idempotent follows immediately.

\underline{ii) $\to$ i)}
Let $\Phi:  G \overline{\otimes}H \to K$ be the $\ell$-homomorphism such that \[\mathfrak{b}(a,b)=\Phi(a\otimes b)\] for every $a$ in $G$ and $b$ in $H$. Corollary \ref{unitalcase} yields to $G \overline{\otimes}H$ is a unital archimedean $f$-ring with $e_g\otimes e_H$ as a unit element. This, with Theorem $3.2$ in \cite{BoulabAmine} show that $\Phi$ is multiplicative. The result follows immediately since \[\mathfrak{b}(ac,bd)=\Phi(ac\otimes bd)=\Phi(a\otimes b)\Phi(c\otimes d)=\mathfrak{b}(a,b)\mathfrak{b}(c,d)\] for every $a$ and $c$ in $G$ and every $b$ and $d$ in $H$.\end{proof}

\bibliographystyle{plain}
\bibliography{mabiblio}

\begin{thebibliography}{10}

\bibitem{AF}
M.~Anderson and T.~Feil.
\newblock {\em A first course in abstract algebra}.
\newblock CRC Press, Boca Raton, FL, third edition, 2015.
\newblock Rings, groups, and fields.

\bibitem{AzouziAmineJaber}
Y.~Azouzi, M.~A. Ben~Amor, and J.~Jaber.
\newblock The tensor product of {$f$}-algebras.
\newblock {\em Quaest. Math.}, to appear.

\bibitem{AzouziAmine}
Youssef Azouzi and Mohamed~Amine Ben~Amor.
\newblock On von {N}eumann regular elements in f-rings.
\newblock {\em Algebra Universalis}, 78(1):119--124, 2017.

\bibitem{BoulabAmine}
M.~A. Ben~Amor and K.~Boulabiar.
\newblock Almost {$f$}-maps and almost {$f$}-rings.
\newblock {\em Algebra Universalis}, 69(1):93--99, 2013.

\bibitem{BKW}
A~Bigard, K.~Keimel, and S.~Wolfenstein.
\newblock {\em Groupes et anneaux r\'eticul\'es}.
\newblock Lecture Notes in Mathematics, Vol. 608. Springer-Verlag, Berlin-New
  York, 1977.

\bibitem{Bleier}
R.~D. Bleier.
\newblock Minimal vector lattice covers.
\newblock {\em Bull. Austral. Math. Soc.}, 5:331--335, 1971.

\bibitem{BoulabiarToumi}
K.~Boulabiar and M.~A. Toumi.
\newblock Lattice bimorphisms on {$f$}-algebras.
\newblock {\em Algebra Universalis}, 48(1):103--116, 2002.

\bibitem{BuskesTensor}
G.~J. H.~M. Buskes and A.~C.~M. van Rooij.
\newblock The {A}rchimedean {$l$}-group tensor product.
\newblock {\em Order}, 10(1):93--102, 1993.

\bibitem{Buskesfalg}
G.~J. H.~M. Buskes and A.~W. Wickstead.
\newblock Tensor products of {$f$}-algebras.
\newblock {\em Mediterr. J. Math.}, 14(2):Art. 63, 10, 2017.

\bibitem{ConradCov}
P.~F. Conrad.
\newblock Minimal vector lattice covers.
\newblock {\em Bull. Austral. Math. Soc.}, 4:35--39, 1971.

\bibitem{FremlinTens}
D.~H. Fremlin.
\newblock Tensor products of {A}rchimedean vector lattices.
\newblock {\em Amer. J. Math.}, 94:777--798, 1972.

\bibitem{FremlinBanach}
D.~H. Fremlin.
\newblock Tensor products of {B}anach lattices.
\newblock {\em Math. Ann.}, 211:87--106, 1974.

\bibitem{IsbellHenr}
M.~Henriksen and J.~R. Isbell.
\newblock Lattice-ordered rings and function rings.
\newblock {\em Pacific J. Math.}, 12:533--565, 1962.

\bibitem{JohnsonComp}
D.~G. Johnson.
\newblock The completion of an archimedean {$f$}-ring.
\newblock {\em J. London Math. Soc.}, 40:493--496, 1965.

\bibitem{MartinezTens}
J.~Martinez.
\newblock Tensor products of partially ordered groups.
\newblock {\em Pacific J. Math.}, 41:771--789, 1972.

\bibitem{Steinberg}
S.~A. Steinberg.
\newblock {\em Lattice-ordered rings and modules}.
\newblock Springer, New York, 2010.

\end{thebibliography}

\end{document}